\definecolor{darkred}{rgb}{0.5,0,0}
\definecolor{gcolor}{rgb}{0.004,0.396,0.741}	
\theoremstyle{plain}
\newtheorem{theorem}{Theorem}
\newtheorem*{Shidlovskii}{Shidlovskii's lemma}
\newtheorem{lemma}{Lemma}
\newtheorem{corollary}{Corollary}
\theoremstyle{remark}
\newtheorem*{remark}{Remark}
\begin{document}

\title[Algebraic Independence of Airy Function, Derivative, and Antiderivative]{Algebraic Independence of an Airy Function, Its Derivative, and Antiderivative}
\author{Folkmar Bornemann}
\address{Department of Mathematics, Technical University of Munich, 80290 Munich, Germany}
\email{bornemann@tum.de}


\begin{abstract}
Using tools from the Siegel--Shidlovskii theory of transcendental numbers, we prove that a nontrivial solution of the Airy equation, its derivative, and an antiderivative are algebraically independent over the field of rational functions. Courtesy of Michael Singer, the result is also derived from general considerations in differential Galois theory.
\end{abstract}

\maketitle

We consider the Airy equation in the complex domain, $u'' = z u$.
Any  solution $u\neq 0$ is a transcendental entire function of non-integer order $3/2$ \cite[Prop.~5.1]{MR1207139}, which thus has infinitely many zeros \cite[p.~26]{MR589888}. As a result, the logarithmic derivative $u'/u$ has infinitely many poles, so it cannot be algebraic over the field $\C(z)$ of rational functions. Therefore, by a classical lemma of Siegel\footnote{Siegel's lemma -- \cite[p.~215]{zbMATH02563202}, \cite[p.~60]{MR32684}; see also the expositions in  \cite[Lemma~6.2]{MR1033015}, \cite[Lemma~14.3.1]{MR1207139} -- states that if the differential equation
\[
w'' + a_1 w' + a_0 w = 0,\quad a_0,a_1 \in \C(z),
\]
has a solution $w_0\neq 0$ such that $w_0$, $w_0'$ are algebraically dependent over $\C(z)$, then it has a solution $w_1\neq 0$ whose logarithmic derivative $w_1'/w_1$ is algebraic over $\C(z)$.} on linear second-order differential equations, the functions $u$ and~$u'$ are algebraically independent over $\C(z)$; see also \cite[Thm.~1]{MR683055} and \cite[Prop.~14.4.3]{MR1207139} for a generalization to algebraic independence over a larger field of meromorphic functions.

For a study in random matrix theory \cite{BornemannRMT}, we had to extend the algebraic in\-de\-pen\-dence by adding an antiderivative of $u$. So, the purpose of this note is to prove the following result.

\begin{theorem}\label{thm} Let $u\neq 0$ be a solution of $u''=z u$. Then, the functions $u, u', U$, where $U$ satisfies $U'=u$, are algebraically independent over $\C(z)$.
\end{theorem}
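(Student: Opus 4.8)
Put $K:=\C(z)(u,u')$. Since $u$ and $u'$ are algebraically independent over $\C(z)$, as recalled above, $K$ has transcendence degree $2$ over $\C(z)$, so the theorem reduces to showing that $U$ is transcendental over $K$. I first translate this into a statement about the derivation. If $U$ were algebraic over $K$ with minimal polynomial $T^n+a_{n-1}T^{n-1}+\cdots+a_0\in K[T]$, then differentiating $U^n+a_{n-1}U^{n-1}+\cdots+a_0=0$ and using $U'=u$ produces a polynomial relation for $U$ over $K$ of degree $<n$, hence the trivial one; its coefficient of $U^{n-1}$ is $nu+a_{n-1}'=0$, so $u=v'$ with $v:=-a_{n-1}/n\in K$ (for $n=1$ this already says $U\in K$). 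It therefore suffices to show that $u\neq v'$ for every $v\in K$.

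For that I would pass to the Riccati picture. Set $t:=u'/u$. By the argument recalled above $t$ is not algebraic over $\C(z)$, and the algebraic independence of $u,u'$ then makes $u$ transcendental over $F:=\C(z)(t)$, with $K=F(u)$; moreover $u''=zu$ yields $t'=z-t^2$, so $F$ carries the Riccati derivation, while $u'/u=t\in F$ presents $u$ as an exponential element over $F$. I will use twice that the Riccati equation $r'=z-r^2$ has no solution algebraic over $\C(z)$: such an $r$ equals $w'/w$ for a nonzero solution $w$ of $w''=zw$, and then $w'/w$ has infinitely many poles, contradicting algebraicity just as in the introductory argument.

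So let $v\in F(u)$ with $v'=u$. The first point is that the denominator of $v$, as a rational function of $u$ over $F$ in lowest terms, is a power of $u$: any other monic irreducible factor $\pi\in F[u]$ of it must satisfy $\pi\mid\pi'$ — otherwise $v'$ would have a pole along $\pi$ of order one greater than that of $v$, which is impossible since $v'=u$ is a polynomial in $u$ — hence $\pi'=g\pi$ for some $g\in F$; comparing the coefficients of $\pi$ as a polynomial in $u$ (the leading one giving $g=(\deg_u\pi)\,t$, then each successive coefficient $c$ giving $c'/c=kt$ for the relevant $k\geq1$, so that $c/u^k$ is a constant and hence $c=0$ because $u$ is transcendental over $F$) forces $\pi=u$, a contradiction. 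Thus $v=\sum_n a_nu^n$ is a Laurent polynomial in $u$ over $F$, and $v'=\sum_n(a_n'+nt\,a_n)u^n=u$ gives $a_n'+nt\,a_n=0$ for $n\neq1$ and $a_1'+t\,a_1=1$; for $n\neq0,1$ the former says that $a_nu^n$ is a constant, whence $a_n=0$ by transcendence of $u$ over $F$, while $a_0$ (a constant) is irrelevant. Everything therefore reduces to solving $p'+tp=1$ with $p\in F=\C(z)(t)$; this last reduction is exactly the exponential case of Risch's structure theorem for derivatives.

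Finally I would show $p'+tp=1$ is unsolvable in $\C(z)(t)$, which I identify with $\C(z)(T)$, $T$ transcendental, carrying the derivation with $T'=z-T^2$. If $p$ had a pole at $T=\beta$ with $\beta$ algebraic over $\C(z)$, of order $m\geq1$, then $p'+Tp$ would contain a term in $(T-\beta)^{-(m+1)}$ whose coefficient is a nonzero multiple of $\beta'-z+\beta^2$, which must therefore vanish, making $\beta$ an algebraic solution of the Riccati equation — impossible. Hence $p\in\C(z)[T]$, and comparing $T$-degrees in $p'+Tp=1$ forces $\deg_T p=1$ (it cannot be $0$); writing $p=p_1(z)T+p_0(z)$ the equation reduces to $p_0=-p_1'$ together with the inhomogeneous Airy equation $p_1''-zp_1=-1$, which has no rational solution $p_1$ (a pole of $p_1$ of order $m$ makes $p_1''-zp_1$ have a pole of order $m+2$, and a nonzero polynomial $p_1$ of degree $d$ makes it have degree $d+1\geq1$ — both incompatible with the right-hand side $-1$). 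This contradiction completes the proof. The main obstacle, I expect, is the denominator control: the reduction to a Laurent polynomial in $u$ and the exclusion of finite poles of $p$ in $T$, both resting on the absence of algebraic Riccati solutions; by contrast the passage from algebraicity to being a derivative and the concluding rational-solution count are routine.
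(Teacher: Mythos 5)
Your proof is correct, but it takes a genuinely different route from the paper. The paper invokes Shidlovskii's lemma to produce an irreducible $p$ with $Dp=\omega p$ and then eliminates it with two lemmas about $D_{\text{Airy}}$ (a Siegel-style homogeneity argument, and a reduction to the non-existence of polynomial solutions of $g''=zg+c$). You instead make a Liouville-type reduction: algebraicity of $U$ over $K=\C(z)(u,u')$ forces $u=v'$ for some $v\in K$ (the minimal-polynomial argument), and you then run a Risch-style analysis in the tower $\C(z)\subset F=\C(z)(t)\subset K=F(u)$ with $t=u'/u$, $t'=z-t^2$, $u'=tu$: denominator control shows $v$ is a Laurent polynomial in $u$, coefficient comparison reduces everything to $p'+tp=1$ with $p\in F$, finite poles of $p$ are excluded because they would produce a solution of the Riccati equation algebraic over $\C(z)$ (impossible by the same logarithmic-derivative fact the paper recalls in its introduction), and the polynomial part leads to the inhomogeneous Airy equation $p_1''-zp_1=-1$, which has no rational solution. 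What each approach buys: yours avoids Shidlovskii's lemma entirely (which the paper quotes without proof) and is self-contained modulo the introductory facts, but it must take Corollary~\ref{cor:1} as input, whereas the paper's machinery re-derives it along the way; in substance your argument is an explicit, elementary version of the appendix's Galois-theoretic criterion (Theorem~\ref{thm:2}), and it is a pleasant check that all three proofs bottom out at the same obstruction, the absence of rational (resp.\ polynomial) solutions of the inhomogeneous Airy equation. The steps you flag as delicate -- the pole-order growth at an irreducible $\pi$ with $\pi\nmid\pi'$, the exclusion of $\pi\mid\pi'$ except for $\pi=u$, the partial-fraction computation at $T=\beta$, and the concluding degree count -- are all correct as stated.
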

The proof\/\footnote{Another proof, based on the machinery of differential Galois theory, is sketched in the Appendix.} given below is elementary and self-contained, except for the following facts:
\begin{itemize}\itemsep=3pt
\item as already explained, any solution $u\neq 0$ of~the Airy equation and its logarithmic derivative $u'/u$ are transcendental over $\C(z)$;
\item a lemma of Shidlovskii, which has a comparatively short algebraic proof in terms of resultants and which generalizes the first step in the proof of Siegel's lemma. 
\end{itemize}

\begin{Shidlovskii}[\protect{\cite[Corollary p.~188]{MR1033015}}]\hspace*{-2mm}\footnote{Here, we have specialized Shidlovskii's lemma to the homogeneous case with polynomial coefficients.} 
Suppose that the entire functions $w_1,\ldots,w_n$ satisfy a homogeneous system of first-order differential equations with polynomial coefficients,
 \[
\begin{pmatrix}
w_1'\\
\vdots\\
w_n'
\end{pmatrix} = A
\begin{pmatrix}
w_1\\
\vdots\\
w_n
\end{pmatrix},\quad A \in \C[z]^{n \times n}.
\]
Let $w_1,\ldots,w_n$ have transcendence degree $n-1$ over $\C(z)$ so that they are connected by an algebraic equation of the form $p(z,w_1(z),\ldots,w_n(z)) = 0$, where $p \in \C[z,y_1,\ldots,y_n]$ is irreducible. Let $D$ denote the associated differential operator
 \[
 D = \frac{\partial}{\partial z} + \sum_{j,k=1}^n A_{jk}\cdot y_k \frac{\partial}{\partial y_j}.
 \]
 Then, $D p = \omega p$ factorizes as polynomials of $n+1$ independent variables, with $\omega \in \C[z]$. 
\end{Shidlovskii}

\begin{remark}
We note that for any $q\in \C[z,y_1,\ldots,y_n]$ the polynomial $Dq \in \C[z,y_1,\ldots,y_n]$ is constructed to evaluate the total derivative along the field of solutions; namely, it satisfies
\[
\frac{d}{dz} q(z,w_1(z),\ldots w_n(z)) = Dq(z,w_1(z),\ldots,w_n(z)).
\]
\end{remark}

Now, writing $w_1 = u$, $w_2 = u'$ and $w_3 = U$, the Airy equation $u''=zu$ and the defining equation $U'=u$ for an antiderivative induce a system of $n=3$ first-order equations, 
\[
w_1' = w_2,\quad w_2' = z w_1, \quad w_3' = w_1.
\]
Hence, the associated differential operator takes the form 
\[
D = D_{\text{Airy}}+ y_1 \frac{\partial}{\partial y_3}, \quad D_{\text{Airy}} =  \frac{\partial}{\partial z} + y_2 \frac{\partial}{\partial y_1} + z y_1 \frac{\partial}{\partial y_2}.
\]
We need two preparatory lemmas for the proof of Theorem~\ref{thm}.

\begin{lemma}\label{lem:1} Let be $p \in \C[z,y_1,y_2]$. Then $D_{\text{\rm Airy}} p$ is divisible by $p$ if and only if $p$ is constant.
\end{lemma}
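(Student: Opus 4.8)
The ``if'' direction is trivial: a constant $p$ has $D_{\text{Airy}}p=0$, which $p$ divides. For the ``only if'' direction I would argue by contradiction, assuming $D_{\text{Airy}}p=\omega p$ for some $\omega\in\C[z,y_1,y_2]$ while $p$ is not constant, and aiming to contradict the algebraic independence over $\C(z)$ of an Airy solution and its derivative.

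The guiding picture: the orbits of the vector field $D_{\text{Airy}}=\partial_z+y_2\partial_{y_1}+zy_1\partial_{y_2}$ on $\C^3$ are precisely the curves $z\mapsto\bigl(z,w(z),w'(z)\bigr)$ with $w''=zw$, and the relation $D_{\text{Airy}}p=\omega p$ forces the surface $\{p=0\}$ to be invariant under this flow; running an orbit through $\{p=0\}$ will then yield an algebraic relation between $w$ and $w'$. To make this precise I would first pick a point $(z_0,a,b)\in\{p=0\}$ with $(a,b)\neq(0,0)$. Such a point exists: a non-constant $p$ cannot vanish only on the $z$-axis $\{y_1=y_2=0\}$, for otherwise, by the Nullstellensatz, $p$ would divide a power of $y_1$ and a power of $y_2$ and hence be a nonzero constant. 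I expect this small step --- the only place where the hypothesis that $p$ is non-constant really enters --- to be the point needing the most care, though it is short.

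Next I would take $w$ to be the unique entire solution of $w''=zw$ with $w(z_0)=a$, $w'(z_0)=b$; as $(a,b)\neq(0,0)$, $w\not\equiv0$. The entire function $f(z):=p\bigl(z,w(z),w'(z)\bigr)$ then satisfies, by the Remark above applied to the Airy system $w_1'=w_2$, $w_2'=zw_1$ (whose associated operator is exactly $D_{\text{Airy}}$) --- equivalently, by the chain rule and $w''=zw$ ---
\[
f'(z)=(D_{\text{Airy}}p)\bigl(z,w(z),w'(z)\bigr)=\omega\bigl(z,w(z),w'(z)\bigr)\,f(z),
\]
a linear first-order differential equation with entire coefficient. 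Since $f(z_0)=p(z_0,a,b)=0$, uniqueness gives $f\equiv0$, i.e.\ $p(z,w(z),w'(z))=0$ for all $z$; as $p$ is a nonzero element of $\C(z)[y_1,y_2]$, this is a nontrivial algebraic relation over $\C(z)$ between $w$ and $w'$, contradicting the fact recalled at the beginning of the paper. Hence $p$ is constant. Beyond the point-finding step, the argument uses only existence and uniqueness for linear ODEs, so I do not anticipate further obstacles.
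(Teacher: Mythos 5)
Your argument is internally coherent up to its very last step, and most of it checks out: the Nullstellensatz observation (if the zero set of a nonconstant $p$ lay in the line $y_1=y_2=0$, then $p$ would divide powers of both $y_1$ and $y_2$, forcing $p$ to be constant) does produce a point $(z_0,a,b)\in\{p=0\}$ with $(a,b)\neq(0,0)$; and evaluating along the Airy solution with $w(z_0)=a$, $w'(z_0)=b$, the identity $f'=\omega(z,w,w')\,f$ with $f(z_0)=0$ indeed forces $p(z,w(z),w'(z))\equiv 0$. The genuine problem is the fact you contradict at the end: the algebraic independence of $w$ and $w'$ over $\C(z)$. Inside this paper, that statement is exactly Corollary~\ref{cor:1}, and Corollary~\ref{cor:1} is \emph{deduced from} Lemma~\ref{lem:1} via Shidlovskii's lemma; so, taken as the paper's proof of Lemma~\ref{lem:1}, your argument is circular. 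The independence is indeed ``recalled at the beginning,'' but only as a consequence of Siegel's classical lemma, which the paper deliberately does \emph{not} take as an input: the declared external facts for the self-contained development are only the transcendence of $u$ and of the logarithmic derivative $u'/u$, plus Shidlovskii's lemma, and Lemma~\ref{lem:1} is precisely the device by which the Siegel-type independence gets re-proved. Using the independence to prove the lemma either collapses the chain into a circle or silently adds Siegel's lemma as a further black box, which changes the paper's logical economy and makes Corollary~\ref{cor:1} an assumption rather than a result.

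The paper's proof shows how to reach a contradiction from the weaker inputs only: the degree bound on the quotient gives $D_{\text{Airy}}p=(az+b)p$ as in \eqref{eq:1}; the top-degree homogeneous part $q$ of $p$ in $y_1,y_2$ (degree $m$) satisfies the same relation, and integrating along the \emph{general} solution $u=c_1u_1+c_2u_2$ yields $q(z,u(z),u'(z))=r(c_1,c_2)e^{az^2/2+bz}$ with $r$ homogeneous of degree $m$; for $m\geq 1$ one can choose $(c_1,c_2)\neq(0,0)$ with $r(c_1,c_2)=0$, so $q(z,u,u')\equiv 0$ for a nontrivial solution, and homogeneity turns this into a polynomial relation for $u'/u$ alone (or algebraicity of $u$ itself if $q$ is free of $y_2$) --- contradicting only the transcendence of $u'/u$, respectively of $u$. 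Your evaluation-along-a-trajectory idea is close in spirit to the variant sketched in the paper's footnote (inserting $\Bi,\Bi'$ and using asymptotics), and it could be salvaged either by explicitly citing Siegel's lemma as an extra external ingredient, or by inserting the homogenization step so that the vanishing you produce concerns $u'/u$ only; as written, however, it presupposes the very independence statement the lemma is designed to deliver.
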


\begin{proof}\hspace*{-1.5mm}\footnote{The proof is modeled after the second step in the proof of Siegel's lemma \cite[pp.~61--62]{MR32684}; see also the expositions in \cite[pp.~210--211]{MR1033015} and \cite[pp.~291--292]{MR1207139}. Alternatively, Lemma~\ref{lem:1} can be proved by
inserting the particular Airy function $\Bi(z)$ and its derivative $\Bi'(z)$ as $y_1, y_2$ into \eqref{eq:1}, so that after integration 
\[
p(z,\Bi(z),\Bi'(z)) = \varrho e^{az^2/2 + b z}, \quad \varrho \in \C.
\]
We then conclude $a = b = 0$, $p=\varrho$ from employing, as $z\to\infty$ along the positive real axis, the Poincaré-type expansions \cite[§9.7(ii)]{MR2723248} of the functions $\Bi(z)$ and $\Bi'(z)$. However, we prefer Siegel's elegant algebraic approach.}  
\,Suppose that $D_{\text{\rm Airy}} p$ is divisible by $p$ (the converse direction of the assertion is obvious).
Since the total degree of $D_{\text{\rm Airy}} p$ in the variables $y_1,y_2$ does not exceed the corresponding degree of $p$, and the degree of $D_{\text{\rm Airy}} p$ in $z$ may be $1$ greater than the degree of $p$, it follows that the quotient -- when $D_{\text{\rm Airy}} p$ is divided by $p$ -- is a polynomial in $z$ of degree at most $1$. Hence, as polynomials in three independent variables,
\begin{equation}\label{eq:1}
D_{\text{\rm Airy}}p = (az+b)p,\quad a,b \in \C.
\end{equation}
We aggregate into $q\in \C[z,y_1,y_2]$ all the terms in $p$ that have largest total degree in $y_1,y_2$. So, $q$ is a homogeneous polynomial in~$y_1,y_2$ of some degree $m$ with coefficients in $\C[z]$. Since application of the operator $D_\text{Airy}$ either preserves the degree of a homogeneous term in $y_1, y_2$ or sends that term to zero, \eqref{eq:1} induces 
\[
D_{\text{\rm Airy}} q = (az+b)q.
\]
Now, given a fundamental system $u_1, u_2$ of the Airy equation, we insert the general solution $u = c_1 u_1 + c_2 u_2$ with $c_1, c_2 \in \C$ for $y_1$ 
and its derivative $u'$ for $y_2$. This way, we obtain
\[
\frac{d}{dz} q(z,u(z),u'(z)) = D_{\text{\rm Airy}}q(z,u(z),u'(z)) = (az+b) q(z,u(z),u'(z)).
\]
Integration yields
\[
q(z,u(z),u'(z)) = r(c_1,c_2) e^{a z^2/2 + bz}, \quad r \in \C[c_1,c_2],
\]
where $r$ is homogeneous of degree $m$. 

Suppose that $m=0$. Then, by construction, we have that $q=p$, and the polynomial $p$ depends only on $z$. We also have $r=\varrho$ for some constant $\varrho \in \C$. Since the exponential term represents a polynomial if and only if $a=b=0$, we conclude that $p=\varrho$ is constant.

Finally, contrary to what has to be shown, suppose that $m\geq 1$. In this case, we can determine values of $c_1, c_2$, not both zero, such that $r(c_1,c_2) = 0$. The corresponding particular solution $u = c_1 u_1 + c_2 u_2$ of the Airy equation is nontrivial and satisfies $q(z,u(z),u'(z)) = 0$. If $q$ does not depend on $y_2$, the solution $u$ is algebraic over $\C(z)$, which is impossible. So, $q$ depends on $y_2$ and, by homogeneity, we obtain in a region excluding the zeros of $u$ that
\[
q(z,1,u'(z)/u(z)) = 0.
\]
Therefore, the logarithmic derivative $u'/u$ is algebraic over $\C(z)$, which is impossible, too.
\end{proof}

\begin{corollary}\label{cor:1} Let $u\neq 0$ solve $u''=z u$. Then, $u, u'$ are algebraically independent over $\C(z)$.
\end{corollary}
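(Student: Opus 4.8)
The plan is to argue by contradiction and reduce to Lemma~\ref{lem:1}. Suppose $u$ and $u'$ were algebraically dependent over $\C(z)$, so that some nonzero polynomial in $\C[z,y_1,y_2]$ vanishes at $(z,u(z),u'(z))$. Factoring it into irreducibles and using that the entire functions form an integral domain, I may replace it by an irreducible factor $p\in\C[z,y_1,y_2]$ with $p(z,u(z),u'(z))=0$. Since $u$ is transcendental over $\C(z)$, this $p$ must genuinely involve $y_2$, and $\C(z,u(z),u'(z))$ has transcendence degree $2$ over $\C$ (one more than $\C(z)$, by transcendence of $u$; no more, since $u'$ is algebraic over $\C(z,u)$ by assumption).

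Next I would note that $D_{\text{Airy}}p$ also vanishes at $(z,u(z),u'(z))$: by the Remark (the chain rule together with $u''=zu$), differentiating the identity $p(z,u(z),u'(z))=0$ gives $0=\frac{d}{dz}p(z,u(z),u'(z))=(D_{\text{Airy}}p)(z,u(z),u'(z))$.

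The \emph{crux}, and what I expect to be the main obstacle, is to upgrade ``$D_{\text{Airy}}p$ vanishes on the curve $z\mapsto(z,u(z),u'(z))$'' to the algebraic statement ``$p$ divides $D_{\text{Airy}}p$ in $\C[z,y_1,y_2]$''. The cleanest route is ideal-theoretic: the set $I$ of polynomials in $\C[z,y_1,y_2]$ vanishing at $(z,u(z),u'(z))$ is a nonzero prime ideal whose quotient is a domain of Krull dimension $2$ (the transcendence degree computed above); since $\C[z,y_1,y_2]$ is a three-dimensional unique factorization domain, $I$ has height $1$ and is therefore principal, necessarily generated by the irreducible $p$. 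Hence $D_{\text{Airy}}p\in I=(p)$. (Alternatively, avoiding dimension theory: by Gauss's lemma $p$ is still irreducible as a polynomial in $y_2$ over $\C(z,y_1)$; were $p\nmid D_{\text{Airy}}p$, one could clear denominators in a Bézout identity to obtain $ap+b\,D_{\text{Airy}}p=c$ with $a,b\in\C[z,y_1,y_2]$ and $0\neq c\in\C[z,y_1]$, and evaluating at $(z,u(z),u'(z))$ would give $c(z,u(z))\equiv 0$, contradicting that $z$ and $u$ are algebraically independent over $\C$.)

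Finally, Lemma~\ref{lem:1} applied to this $p$ forces $p$ to be constant, contradicting its irreducibility. Hence no such $p$ exists, and $u$ and $u'$ are algebraically independent over $\C(z)$.
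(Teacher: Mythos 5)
Your proof is correct, but at the decisive step it takes a genuinely different route from the paper. The paper simply invokes Shidlovskii's lemma with $n=2$, $w_1=u$, $w_2=u'$: since the transcendence degree is $1$, that lemma directly supplies an irreducible $p$ with $p(z,u,u')=0$ and $D_{\text{Airy}}p=\omega p$, $\omega\in\C[z]$, after which Lemma~\ref{lem:1} yields the contradiction. You instead prove the required divisibility yourself: by the chain-rule Remark, $D_{\text{Airy}}p$ lies in the vanishing ideal $I$ of the solution curve, and you show $I=(p)$ either by the dimension count (your transcendence-degree computation makes $I$ a height-one prime of the UFD $\C[z,y_1,y_2]$, hence principal and generated by the irreducible $p$), or, more elementarily, by the Bézout argument in $\C(z,y_1)[y_2]$ --- which is in essence the resultant-type first step of Siegel's and Shidlovskii's own proofs, reproduced in the two-variable case. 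Note that your conclusion is formally weaker than Shidlovskii's: you only obtain $p\mid D_{\text{Airy}}p$ with a cofactor a priori in $\C[z,y_1,y_2]$ rather than in $\C[z]$; but divisibility is exactly the hypothesis of Lemma~\ref{lem:1} (whose proof recovers the cofactor $az+b$ from degree considerations anyway), so your application is legitimate. What your route buys is self-containedness for the corollary, with no external lemma beyond standard commutative algebra (or Gauss's lemma plus Bézout); what the paper's route buys is brevity and uniformity, since Shidlovskii's lemma is needed in any case, at full strength with $n=3$, in the proof of Theorem~\ref{thm}.
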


\begin{proof} Suppose to the contrary that $w_1=u, w_2=u'$ are algebraically dependent. 
Then, since $w_1$ is transcendental, the transcendence degree of $w_1, w_2$ is $1$. By Shidlovskii's lemma, there is an irreducible polynomial $p \in \C[z,y_1,y_2]$, such that $p(z,w_1(z),w_2(z)) = 0$ with
\[
D_\text{Airy}p = \omega p,\quad \omega \in \C[z].
\]
By Lemma~\ref{lem:1}, $p$ must be constant, contradicting the irreducibility.
\end{proof}

\begin{lemma}\label{lem:2} Let be $p \in \C[z,y_1,y_2]$. Then $D_{\text{\rm Airy}} p + cy_1 = 0$ for some $c \in \C$ if and only if $c=0$ and $p$ is constant.
\end{lemma}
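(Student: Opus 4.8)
The plan is to run the machinery behind Lemma~\ref{lem:1}, organised around the grading of $\C[z,y_1,y_2]$ by total degree in $y_1,y_2$. Each of the three terms of $D_{\text{Airy}}=\partial_z+y_2\partial_{y_1}+zy_1\partial_{y_2}$ sends a monomial $y_1^ay_2^b$, with coefficient in $\C[z]$, to a sum of monomials of the same degree $a+b$ in $y_1,y_2$, or to $0$; hence $D_{\text{Airy}}$ is homogeneous of degree $0$ for this grading. Writing $p=\sum_{k\geq 0}p_k$ with $p_k\in\C[z,y_1,y_2]$ homogeneous of degree $k$ in $y_1,y_2$, and observing that $cy_1$ sits entirely in degree $1$, the hypothesis $D_{\text{Airy}}p+cy_1=0$ decouples into
\[
D_{\text{Airy}}p_1+cy_1=0,\qquad D_{\text{Airy}}p_k=0\quad(k\neq 1).
\]

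First I would dispatch the degrees $k\neq 1$. For $k=0$ the operator is just $\partial_z$, so $p_0'=0$ and $p_0\in\C$. For $k\geq 2$: if $p_k\neq 0$, then $D_{\text{Airy}}p_k=0$ is (trivially) divisible by $p_k$, so Lemma~\ref{lem:1} forces $p_k$ to be a constant, contradicting that it is homogeneous of positive degree in $y_1,y_2$; therefore $p_k=0$. Thus $p=p_0+p_1$ with $p_0$ a constant, and only the degree-$1$ part remains.

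Writing $p_1=\alpha(z)y_1+\beta(z)y_2$ with $\alpha,\beta\in\C[z]$, a short computation gives
\[
D_{\text{Airy}}p_1=(\alpha'+z\beta)\,y_1+(\alpha+\beta')\,y_2,
\]
so the degree-$1$ equation is equivalent to $\alpha=-\beta'$ together with $\beta''-z\beta=c$. This last polynomial identity is the real (if mild) obstacle, and I would settle it by a degree count: if $\beta\neq 0$ with $\deg\beta=m\geq 0$, the term $-z\beta$ has degree $m+1$ and cannot be cancelled by $\beta''$ (of degree at most $m-2$), so $\deg(\beta''-z\beta)=m+1\geq 1>0\geq\deg c$ — a contradiction. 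Hence $\beta=0$, whence $\alpha=-\beta'=0$ and $c=-(\alpha'+z\beta)=0$; in particular $p_1=0$. Altogether $p=p_0$ is constant and $c=0$. The converse direction is immediate, since $D_{\text{Airy}}$ annihilates constants.
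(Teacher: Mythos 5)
Your proof is correct, and it takes a somewhat different route than the paper's. You exploit the fact that $D_{\text{Airy}}$ preserves the total degree in $y_1,y_2$ (an observation the paper makes inside its proof of Lemma~\ref{lem:1}) to split $p$ into homogeneous components and decouple the hypothesis degree by degree: the components of degree $\geq 2$ are killed by applying Lemma~\ref{lem:1} to each of them separately (since $D_{\text{Airy}}p_k=0$ is trivially divisible by $p_k$), the degree-$0$ component is a constant, and the whole problem is localized in the explicit degree-$1$ system, which produces the inhomogeneous Airy equation $\beta''=z\beta+c$ and the same degree-count contradiction as in the paper. The paper instead expands $p$ in powers of $y_1$, then of $y_2$, compares only the low-order coefficients (of $y_1^0,y_1^1$ and then $y_2^0,y_2^1$) to arrive at the same equation $g''=zg+c$, concludes $g=0$ and $c=0$, and only then invokes Lemma~\ref{lem:1} once, applied to all of $p$, from $D_{\text{Airy}}p=0$. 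Your version buys a sharper structural picture (a priori $p=p_0+p_1$ with all higher components vanishing identically) at the cost of invoking Lemma~\ref{lem:1} for each higher-degree component, while the paper's version is more economical, extracting just the few coefficients it needs and using Lemma~\ref{lem:1} a single time at the end; both rest on the same two ingredients, Lemma~\ref{lem:1} and the nonexistence of a nonzero polynomial solution of the inhomogeneous Airy equation.
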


\begin{proof} Suppose that $D_\text{Airy}p + c y_1=0$ for some $c \in \C$ (the converse direction of the assertion is obvious).
Expanding in powers of $y_1$, we write
\[
p(z,y_1,y_2) = \sum_{k=0}^m q_k(z,y_2) y_1^k,\quad q_k \in \C[z,y_2].
\]
Applying the operator $D_\text{Airy}$ and setting $q_{m+2}=q_{m+1}= q_{-1} = 0$, we get
\[
D_{\text{\rm Airy}} p = \sum_{k=0}^{m+1} \left(\frac{\partial q_k}{\partial z} + z \frac{\partial q_{k-1}}{\partial y_2} + (k+1) y_2 q_{k+1}\right) y_1^k.
\]
Now, in $D_\text{Airy} p + c y_1 = 0$, comparing the coefficients of the powers $y_1^k$ for $k=0,1$ gives
\begin{equation}\label{eq:2}
\frac{\partial q_0}{\partial z} + y_2 q_1 =0,\quad \frac{\partial q_1}{\partial z} + z \frac{\partial q_0}{\partial y_2} + 2 y_2 q_2 + c = 0.
\end{equation}
If we develop the polynomials $q_0, q_1, q_2$ in powers of $y_2$ with coefficients in $\C[z]$, writing
\[
q_0 = f + g y_2 + O(y_2^2),\quad q_1 = h + O(y_2),\quad q_2 = O(1), \quad f, g, h \in \C[z],
\]
we obtain from comparing in \eqref{eq:2} the coefficients of the powers $y_2^k$ for $k=0,1$ that
\[
f' = 0,\quad g' + h = 0,\quad h' + z g + c = 0.
\]
Hence, $g$ is a polynomial solution of the (inhomogeneous) Airy equation $g'' = z g + c$. This is impossible for $g\neq 0$ since then $\deg (g'') < \deg (z g + c)$. As a result, we get $g= 0$ and a~fortiori also $c=0$, which implies $D_\text{Airy} p = 0$ so that $p$ is constant by Lemma~\ref{lem:1}.
\end{proof}

\begin{proof}[Proof of Theorem~\ref{thm}]\hspace*{-1.75mm}\footnote{The proof is modeled after the one given by Shidlovskii \cite[Lemma 6, p.~190]{MR1033015} that establishes, for fixed $\alpha \in \C\setminus\{0\}$ and $\lambda\in \C\setminus\Z$, the algebraic independence of the entire functions $e^{\alpha z}$ and $_1F_1(1,\lambda;z)$ 
over $\C(z)$.} 
\, Suppose to the contrary that $w_1=u, w_2=u', w_3=U$ are algebraically dependent. 
Then, because of the algebraic independence of $w_1,w_2$ established in Corollary~\ref{cor:1}, the transcendence degree of $w_1, w_2, w_3$ is $2$. By Shidlovskii's lemma, there is an irreducible polynomial $p \in \C[z,y_1,y_2,y_3]$, such that $p(z,w_1(z),w_2(z),w_3(z)) = 0$ with
\[
Dp = \omega p,\quad \omega \in \C[z].
\]
We write 
\[
p(z,y_1,y_2,y_3) = \sum_{k=0}^m q_k(z,y_1,y_2) y_3^k,\quad q_k \in \C[z,y_1,y_2],
\]
with $q_m \neq 0$, where $m\geq 1$ since $w_1$ and $w_2$ are algebraically independent. Applying the operator $D$ and setting $q_{m+1}=0$, we get
\[
D p = \sum_{k=0}^{m} \left(D_\text{Airy} q_k  + (k+1) y_1 q_{k+1}\right)y_3^k.
\]
Now, in $Dp=\omega p$, we start with comparing the coefficients of $y_3^m$ which gives $D_\text{Airy} q_m = \omega q_m$.
Hence, Lemma~\ref{lem:1} implies that $\omega = 0$ and $q_m = \varrho$ for some constant $\varrho \in \C$. Next, comparing the coefficients of $y_3^{m-1}$ yields 
\[
D_\text{Airy} q_{m-1} + m\varrho  y_1 = 0,
\]
which by Lemma~\ref{lem:2} implies $\varrho = 0$, contradicting $q_m \neq 0$. Thus, $w_1,w_2,w_3$ are algebraically independent, which proves Theorem~\ref{thm}.
\end{proof}

We conclude with two brief remarks. First, Corollary~\ref{cor:1} is also true for solutions $u\neq 0$ of Painlevé~II (with a zero parameter), that is, the nonlinear differential equation
\[
u'' = z u + 2 u^3;
\] 
see \cite[Theorem~21.1]{MR1960811}, \cite[Theorem~9.2]{MR3558513}. We conjecture that Theorem~\ref{thm} holds true for Painlevé~II as well. On the other hand -- as for  Painlevé II; see \cite[Eq.~(1(ii))]{MR3558513} -- a solution $u\neq 0$ of the Airy equation, its derivative $u'$, and an antiderivative $U_2$ of $u^2$ are algebraically dependent over $\C(z)$:
\[
u'^2 = z u^2 - U_2 + c, \quad U_2' = u^2,
\]
where $c\in \C$ is a constant of integration.

Second, though we have used tools from transcendental number theory, Theorem~\ref{thm} has no immediate arithmetic consequences (such as the algebraic or linear independence over $\Q$ for certain function values at non-zero algebraic arguments). The standard fundamental system of the Airy equation has power series with rational coefficients, namely (cf. \cite[§9.4]{MR2723248})
\begin{align*}
u_1(z) &= \,_0F_1\big(\tfrac23;\tfrac{z^3}{9}\big) = 1 + \frac{1}{3!}z^3 + \frac{1\cdot 4}{6!} z^6 + \frac{1\cdot 4\cdot 7}{9!}z^9 + \cdots,\\*[2mm]
u_2(z) &= z\cdot\/ _0F_1\big(\tfrac43;\tfrac{z^3}{9}\big) =  z + \frac{2}{4!}z^4 + \frac{2\cdot 5}{7!} z^7 + \frac{2\cdot 5\cdot 8}{10!}z^{10}+ \cdots,
\end{align*}
but neither of these solutions (nor any rational linear combination of them) is a Siegel $E$-function (\cite[p.~223]{zbMATH02563202}, \cite[p.~79]{MR1033015}) or $G$-function (\cite[p.~239]{zbMATH02563202}, \cite[p.~438]{MR1033015}): 
first, writing the non-zero coefficients of $z^n$ as $a_n/n!$ with integer $a_n$, the $a_n$ have no growth bound better than $$a_n = O(e^{-n/3} n^{n/3-1/6}),$$ which rules out the class of $E$-functions; second, writing the  non-zero coefficients as $1/a_n'$ with integer $a_n'$, the $a_n'$ have no  growth bound better than $$a_n' = O(e^{-2n/3} n^{2n/3+2/3}),$$ which rules out the class of $G$-functions.

\appendix
\renewcommand{\sectionname}{}
\renewcommand{\thesection}{Appendix}
\section{Proof of Theorem~\ref{thm} based on differential Galois theory}\label{app:airykernel} 
~\\*[-12mm]
\begin{center}
{\footnotesize -- courtesy of Michael Singer, Department of Mathematics, North Carolina State University --}
\end{center}

\subsubsection*{General considerations}
Let $k = \C(z)$ be the field of rational functions and let $L(u)=0$ be a linear differential equation of order $n$ with coefficients in $k$. Let $u_1,\ldots,u_n$ be a fundamental system of meromorphic solutions so that
\[
K = k\left(u_1,\ldots,u_n,u_1',\ldots,u_n',\ldots,u_1^{(n-1)},\ldots,u_n^{(n-1)}\right)
\]
is the Picard--Vessiot field extension associated with the differential equation. Two important facts about the differential Galois group $G = \text{Gal}(K|k)$ are (cf., e.g., \cite[§6.2]{MR2778109}, \cite[§1.4]{MR1960772}):\footnote{Up to isomorphism over $k$, $G$ does not dependent on the choice of the fundamental system.}
\begin{itemize}\itemsep=3pt
\item $G$, considered as a subgroup of $\GL_n(\C)$, is an algebraic group;
\item the dimension $\dim_\C G$ is the transcendence degree of $K$ over $k$.
\end{itemize}
The following theorem characterizes the transcendence of antiderivatives (writing $\partial = d/dz$).

\begin{theorem}\label{thm:2} Let the differential operator $L \in k[\partial]$ of order $n$ be irreducible and let $K$ be the associated Picard--Vessiot extension. If $u\neq 0$ satisfies $L(u)=0$, then an antiderivative $U$ of $u$ is algebraic over $K$ if and only if the inhomogeneous adjoint equation $L^*(v)=1$ has a solution $v\in k$. In this case, $U$ and $u, u',\ldots,u^{(n-1)}$ are connected by the algebraic equation
\[
U=c-\pi(u,v), \quad c \in \C,
\]
where $\pi(u,v)$ is the bilinear concomitant of $L$, satisfying the Lagrange identity\/\footnote{Cf., e.g., \cite[§5.3]{MR10757}. The concomitant acts as a differential operator of order $n-1$ on each of its arguments.} 
\[
v L(u) - u L^*(v) = \partial\pi(u,v).
\]
\end{theorem}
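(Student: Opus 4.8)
I would prove the two implications separately. Sufficiency is short: given $v\in k$ with $L^*(v)=1$, substitute a solution $u$ of $L(u)=0$ into the Lagrange identity $vL(u)-uL^*(v)=\partial\pi(u,v)$ to get $\partial\pi(u,v)=-u$, so that $U:=c-\pi(u,v)$ satisfies $U'=u$ for every $c\in\C$. Since $v$ and its derivatives lie in $k$ and $\pi$ is bilinear in its two arguments, $\pi(u,v)\in k(u,u',\dots,u^{(n-1)})\subseteq K$; hence $U\in K$, so $U$ is algebraic over $K$, and the displayed relation $U=c-\pi(u,v)$ comes out of the same computation.

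\textbf{Necessity, first steps.} Conversely, assume $U$ is algebraic over $K$. First I would show $U\in K$: differentiating the minimal polynomial $U^{d}+a_{d-1}U^{d-1}+\dots+a_{0}$ of $U$ over $K$ and using $U'=u\in K$ gives a polynomial relation for $U$ over $K$ of degree $\leq d-1$, which by minimality vanishes identically; its leading coefficient reads $a_{d-1}'+d\,u=0$, so $-a_{d-1}/d\in K$ is an antiderivative of $u$, and $U$ — differing from it by a constant — lies in $K$. Next I would use that the solution space $V\subseteq K$ of $L$ is a faithful and, since $L$ is irreducible, \emph{simple} $G$-module (right divisors of $L$ in $k[\partial]$ correspond to $G$-submodules of $V$). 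Two consequences: the $G$-stable subspace $W\subseteq V$ of solutions having an antiderivative in $K$ contains $u\neq 0$, hence $W=V$, so \emph{every} solution of $L$ has an antiderivative in $K$; and a nontrivial unipotent normal subgroup of $G$ would fix a nonzero, hence all, of $V$ and thus be trivial, so $G$ has trivial unipotent radical and — characteristic zero — is linearly reductive.

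\textbf{Necessity, producing the rational solution.} From the first consequence, all solutions of $L\partial$ (the constants and the antiderivatives of solutions of $L$) lie in $K$, so the Picard--Vessiot extension of $k$ for $L\partial$, equivalently — after passing to adjoints — for $\partial L^*$, is $K$ with Galois group $G$. Its $(n+1)$-dimensional solution space $M\subseteq K$ carries the linear map $L^*\colon M\to\C$ with kernel the $n$-dimensional solution space $V^*$ of $L^*$; hence $L^*$ is onto and $M=V^*\oplus\C v_0$ with $v_0\in K$, $L^*(v_0)=1$. Since $\sigma v_0-v_0\in V^*$ for all $\sigma\in G$, the group $G$ acts trivially on $M/V^*\cong\C$; by complete reducibility of the rational $G$-module $M$, the submodule $V^*$ has a $G$-fixed complement $\C\tilde v$, so $\tilde v\in K^G=k$, and $\tilde v\notin V^*$ forces $L^*(\tilde v)\in\C^\times$. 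Rescaling gives $v\in k$ with $L^*(v)=1$; plugging this $v$ into the sufficiency step then recovers $U=c-\pi(u,v)$.

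\textbf{Main obstacle.} The routine ingredients are the minimal-polynomial reduction, the dictionary between right divisors of $L$ and $G$-submodules of $V$, and the invariance of the Picard--Vessiot field under taking adjoints. The one substantive step I would expect to demand care is the descent of a solution of $L^*(v)=1$ from $K$ down to $k$: the key is that irreducibility of $L$ is precisely what makes $V$ a faithful simple $G$-module, hence $G$ reductive, hence the finite-dimensional $G$-module $M$ completely reducible, so that the needed trivial summand splits off.
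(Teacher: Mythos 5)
Your argument is correct, but it is worth noting that the paper does not actually prove Theorem~\ref{thm:2}: it reduces ``algebraic over $K$'' to ``$U\in K$'' by a one-line trace remark and then cites \cite[Lemma~4]{MR1094849}, where the equivalence is established in the more general language of $D$-modules and cohomology. Your minimal-polynomial step is precisely that trace reduction (the coefficient $a_{d-1}$ is, up to sign, the trace of $U$), and your sufficiency direction is the standard Lagrange-identity computation that also yields the displayed relation $U=c-\pi(u,v)$. Where you genuinely diverge is in replacing the citation by a self-contained Galois-theoretic proof of the hard direction: irreducibility of $L$ makes the solution space $V$ a simple, faithful $G$-module, so the unipotent radical acts trivially and $G$ is linearly reductive in characteristic zero; simplicity also upgrades ``$u$ has an antiderivative in $K$'' to ``all solutions do,'' so $K$ is the Picard--Vessiot field of $L\partial$ and hence of its adjoint $\partial L^*$, giving the full $(n+1)$-dimensional space $M=\{v\in K: L^*(v)\in\C\}$ inside $K$; complete reducibility then splits $0\to V^*\to M\to\C\to 0$ and produces a $G$-invariant, hence rational, solution of $L^*(v)=1$. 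This is essentially the representation-theoretic content hiding behind the cohomological statement in the cited lemma, so your route buys self-containedness at the cost of length, while the paper's citation buys brevity and a more general ($D$-module) formulation. The only points you should make explicit to be airtight are routine: faithfulness of $G$ on $V$ (because $K$ is generated over $k$ by $V$ and its derivatives), rationality of the $G$-action on the finite-dimensional $G$-stable subspace $M\subseteq K$, the standard identity of Picard--Vessiot fields for an operator and its adjoint (via Wronskian cofactors), and the harmless non-connectedness caveat in ``trivial unipotent radical $\Rightarrow$ linearly reductive.''
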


For $U\in K$, this theorem rephrases a result that first appeared in \cite[Lemma~4]{MR1094849}, stated there more general in terms of $D$-modules and cohomology groups. If $U$ is algebraic over $K$, it easily follows from using traces that, in fact, $U \in K$. 

\subsubsection*{Application to the Airy Equation}

Let $u, \tilde u$ be a fundamental system of the Airy equation. It is shown in many places (e.g., \cite[p.~44]{MR460303}, \cite[Example~8.15]{MR1960772}, \cite[Example~7.3.3]{MR2778109}, \cite[Theorem~8.1]{MR2800334}) that the differential Galois group of the associated Picard--Vessiot extension $K$ is 
\[
\text{Gal}(K|k) = \SL_2(\C),
\] 
which has dimension~$3$.
Hence, the transcendence degree of $K$ over $k$ is $3$, and any three of the functions $u,u',\tilde u,\tilde u'$  are algebraically independent over $k$,\footnote{Each of them is a rational expression of the other three since the Wronski determinant is a constant.} which implies Corollary~\ref{cor:1}.

The selfadjoint Airy operator $L=\partial^2 - z$ is irreducible: if it factored as $L=L_1\circ (\partial -w)$, then $w\in k$ would be a rational solution of the associated Riccati equation \cite[p.~104]{MR1960772},
\[
w' + w^2 = z.
\] 
An expansion at $z=\infty$ shows this is impossible. Similarly, an expansion at a putative pole shows that the inhomogeneous Airy equation $v''=zv+1$ also has no rational solutions. Hence, Theorem~\ref{thm:2} implies that any antiderivative $U$ of $u$ is transcendental over $K$. In particular, the functions $u, u', U$ are algebraically independent over $k$, 
which proves Theorem~\ref{thm}.

{\small
\subsection*{Acknowledgements} I am grateful to Michael Singer for generously providing the material presented in the Appendix. I thank Alexandre Eremenko for his encouragement and Norbert Steinmetz for pointing out the algebraic dependence of the functions $u$, $u'$, and $U_2$.}

\bibliographystyle{spmpsci}
\bibliography{paper}

\end{document}